\documentclass[a4paper,12pt]{article}
\usepackage[top=2.5cm,bottom=2.5cm,left=2.5cm,right=2.5cm]{geometry}
\usepackage{cite,amsmath,amssymb}
    \usepackage[margin=1cm,%
                font=small,%
                format=hang,%
                labelsep=period,%
                labelfont=bf]{caption}
    \pagestyle{empty}

\usepackage[algoruled,linesnumbered]{algorithm2e}
\usepackage{algorithmic}

\usepackage{amsfonts,epsf,here,graphicx}
\usepackage{latexsym,multirow,rotating}
\usepackage{pgf,tikz,subfigure}
\usepackage{epstopdf}

\newtheorem{theorem}{\bf Theorem}[section]

\newtheorem{lemma}[theorem]{\bf Lemma}

\newcommand{\qed}{\hfill $\square$ \bigskip}

\textheight = 24cm
\textwidth = 16cm

\begin{document}

\baselineskip=0.30in
\vspace*{40mm}

\begin{center}
{\LARGE \bf The Edge-Wiener Index, the Szeged Indices and the PI Index of Benzenoid Systems in Sub-Linear Time}
\bigskip \bigskip

{\large \bf Matev\v z \v Crepnjak$^{a,b,c}$ \qquad Niko Tratnik$^a$
}
\bigskip\bigskip

\baselineskip=0.20in

$^a$ \textit{Faculty of Natural Sciences and Mathematics, University of Maribor, Slovenia} \\
{\tt matevz.crepnjak@um.si, niko.tratnik@um.si}
\medskip

$^b$ \textit{Faculty of Chemistry and Chemical Engineering, University of Maribor, Slovenia}
\medskip

$^c$ \textit{Andrej Maru\v si\v c Institute, University of Primorska, Slovenia}

\bigskip\medskip

(Received \today)

\end{center}

\noindent
\begin{center} {\bf Abstract} \end{center}

\vspace{3mm}\noindent
In this paper, we investigate the edge-Wiener index, the Szeged index, the edge-Szeged index, and the PI index, which are some of the most studied distance-based topological indices. As the main result we show that for benzenoid systems these indices can be computed in sub-linear time with respect to the number of vertices. More precisely, they can be computed in the time dependent on the length of the boundary cycle of a benzenoid system.
\baselineskip=0.30in

%\noindent {\bf Key words:} edge-Wiener index; Szeged index; edge-Szeged index; PI index; benzenoid system; 
%sub-linear time

% \medskip\noindent
% {\bf AMS Subj. Class:} 92E10, 05C12

%%%%%%%%%%%%%%%%%%%%%%%%%%%%%%%%%%%%%%%%%%%%%%%%%%%%%%%%%%%%%%%%%%%%%
%%%%%%%%%%%%%%%%%%%%%%%%%%%%%%%%%%%%%%%%%%%%%%%%%%%%%%%%%%%%%%%%%%%%%
\section{Introduction}
\label{sec:intro}
%%%%%%%%%%%%%%%%%%%%%%%%%%%%%%%%%%%%%%%%%%%%%%%%%%%%%%%%%%%%%%%%%%%%%
%%%%%%%%%%%%%%%%%%%%%%%%%%%%%%%%%%%%%%%%%%%%%%%%%%%%%%%%%%%%%%%%%%%%%
Molecular structure-descriptors, also called topological indices, are used in theoretical chemistry for the design of quantitative structure-property relations (QSPR) and quantitative structure-activity relations (QSAR). Distance-based topological indices are defined by using distances in molecular graphs and can be very useful in a drug discovery process, see \cite{madan}.

The most famous distance-based topological index is the Wiener index and it was first introduced in 1947 by H. Wiener \cite{wiener}. The Wiener index of a connected graph $G$ is defined as

$$W(G) = \sum_{\lbrace u,v \rbrace \subseteq V(G)} d_G(u,v).$$

\noindent It has known correlations with a large number of physico-chemical properties of organic molecules and also possesses interesting mathematical properties. Therefore, the Wiener index has been extensively studied in mathematical and chemical literature, see \cite{knor2}.

It turns out that the Wiener index of a tree can be computed as the sum of edge contributions. Inspired by this fact, the Szeged index was introduced (see \cite{gut_sz}) as
$$Sz(G) = \sum_{e=uv \in E(G)}n_u(e)n_v(e),$$
where $n_u(e)$ denotes the number of vertices of a graph $G$ whose distance to $u$ is smaller than the distance to $v$ and $n_v(e)$ denotes the number of vertices of $G$ whose distance to $v$ is smaller than the distance to $u$. Motivated by the success of the Szeged index, in \cite{def_pi} a similar molecular descriptor called the PI index (or the edge-PI index) was defined by
$$PI(G) = \sum_{e=uv \in E(G)}\big(m_u(e) + m_v(e)\big),$$
where the numbers $m_u(e)$ and $m_v(e)$ are the edge-variants
of the numbers $n_u(e)$ and $n_v(e)$. Later \cite{khal1}, a vertex version of the PI index, called the vertex-PI index, was also defined as $PI_v(G) = \sum_{e=uv \in E(G)}\big(n_u(e) + n_v(e)\big)$. Obviously, for any bipartite graph $G$ the vertex-PI index can be computed as $PI_v(G) = |V(G)| \cdot |E(G)|$. In 2008, the edge-version of the Szeged index, called the edge-Szeged index, was defined (see \cite{gut}) as
$$Sz_e(G) = \sum_{e=uv \in E(G)}m_u(e)m_v(e).$$
Papers \cite{al-fozan,chen1,koor,tratnik-zig,wang0,wang} present a sample of relevant recent investigations on the mentioned distance-based topological indices.

Finally, the edge-Wiener index of a graph was independently introduced in~\cite{iranmanesh-2009,khalifeh-2009}. In~\cite{iranmanesh-2009} several possible variations of the concept were discussed and it was suggested that the edge-Wiener index of a graph $G$ should be defined as the Wiener index of the line graph of $G$. For some recent studies on the edge-Wiener index see \cite{aro,chen2,knor}.

In this paper, we investigate the mentioned indices of benzenoid systems, which are one of the most extensively studied family of chemical graphs. In \cite{chepoi-1997} it was proved that the Wiener index and the Szeged index of a benzenoid system can be computed in linear time. Almost 20 years later, in \cite{kelenc} an algorithm was developed that, for a given benzenoid system $G$ computes the edge-Wiener index of $G$ in linear time. Moreover, similar linear time algorithms for the edge-Szeged index and the PI index of benzenoid systems were developed in \cite{tratnik}. All these algorithms are based on the cut method and more information about this method can be found in survey papers \cite{klavzar-2008,klavzar-2015}. 

In addition to this, Chepoi and Klav\v zar proved \cite{CK-1998} that the Wiener index of a benzenoid system can be computed in the time dependent on the number of vertices in the boundary cycle of a benzenoid system. More precisely, the Wiener index of a benzenoid system with the boundary cycle $Z$ can be computed in $O(|Z|)$ time, where $|Z|$ denotes the number of vertices on cycle $Z$. The proof of this result is based on the result of Chazelle \cite{chazelle}, where he develop an algorithm for computing all vertex-edge visible pairs of edges of a simple (finite) polygon. 

Therefore, we generalize the mentioned results and prove that the edge-Wiener index, the Szeged index, the edge-Szeged index, and the PI index  of a benzenoid system with the boundary cycle $Z$ can be computed in $O(|Z|)$ time.

We proceed as follows. In the next section we give some definitions and important concepts needed later. In section 3 we prove that the Szeged index of a benzenoid system can be computed in sub-linear time. To prove this, we recall the result claiming that the Szeged index of a benzenoid system can be expressed as the sum of weighted Szeged indices of related weighted quotient trees. A method to obtain the weighted trees is also presented. Similar results for the edge-Wiener index, the edge-Szeged index, and the PI index are stated in section 4, but in these cases our method to obtain the weighted quotient trees is different and requires some additional insights.

\section{Preliminaries}
%%%%%%%%%%%%%%%%%%%%%%%%%%%%%%%%%%%%%%%%%%%%%%%%%%%%%%%%%%%%%%%%%%%%%
%%%%%%%%%%%%%%%%%%%%%%%%%%%%%%%%%%%%%%%%%%%%%%%%%%%%%%%%%%%%%%%%%%%%%

In the present paper all graphs are simple, finite and connected. We define $d_G(u,v)$ to be the usual shortest-path distance between two vertices $u, v\in V(G)$. In addition, for a vertex $x \in V(G)$ and an edge $e=uv \in E(G)$ we set 
\begin{equation*}
d_G(x,e) = \min \lbrace d_G(x,u), d_G(x, v) \rbrace\,.
\end{equation*}

\noindent
The distance $d_G(e,f)$ between edges $e$ and $f$ of a graph $G$ is defined as the distance between vertices $e$ and $f$ in the line graph $L(G)$. Here we follow this convention because in this way the pair $(E(G),d_G)$ forms a metric space.
\bigskip

\noindent
The {\em edge-Wiener index} of a graph $G$ is defined as
\begin{equation} \label{def_ew}
W_e(G) = \frac{1}{2} \sum_{e \in E(G)} \sum_{f \in E(G)} d_G(e,f).
\end{equation}

\noindent Let $G$ be a graph and $e=uv$ an edge of $G$. Throughout the paper we will use the following notation:
$$N_1(e|G) = \lbrace x \in V(G) \ | \ d_G(x,u) < d_G(x,v) \rbrace, $$
$$N_2(e|G) = \lbrace x \in V(G) \ | \ d_G(x,v) < d_G(x,u) \rbrace, $$
$$M_1(e|G) = \lbrace f \in E(G) \ | \ d_G(f,u) < d_G(f,v) \rbrace, $$
$$M_2(e|G) = \lbrace f \in E(G) \ | \ d_G(f,v) < d_G(f,u) \rbrace. $$

\noindent
Using introduced notation, the {\em Szeged index} of a graph $G$ is defined as 
$$Sz(G) =\sum_{e \in E(G)}|N_1(e|G)| \cdot |N_2(e|G)|.$$

\noindent
The {\em edge-Szeged} index is defined by the formula
$$Sz_e(G) =\sum_{e \in E(G)}|M_1(e|G)| \cdot |M_2(e|G)|.$$

\noindent
The \textit{PI index} (or the \textit{edge-PI index}) of a graph $G$ is defined as

$$PI(G) = PI_e(G)  =  \sum_{e \in E(G)}\big(|M_1(e|G)| + |M_2(e|G)| \big). $$
\smallskip

\noindent
Let $G$ be a graph and let $w:V(G)\rightarrow {\mathbb R}^+$  and $w':E(G)\rightarrow {\mathbb R}^+$ be given functions. Then $(G,w)$, $(G,w')$, and $(G,w,w')$ are a {\em vertex-weighted graph}, an {\em edge-weighted graph}, and a {\em vertex-edge weighted graph}, respectively.
\bigskip

\noindent
Let ${\cal H}$ be the hexagonal (graphite) lattice and let $Z$ be a cycle on it. Then a {\em benzenoid system} is induced by the vertices and edges of ${\cal H}$, lying on $Z$ and in its interior. For an example of a benzenoid system see Figure \ref{fig:prim}. Note that in paper \cite{DGKZ-2002} such benzenoid systems are called simple.

\begin{figure}[h!] 
\begin{center}
\includegraphics[scale=0.9]{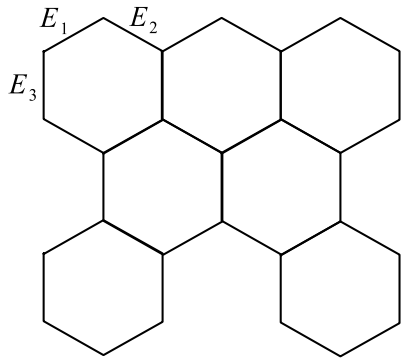}
\end{center}
\caption{\label{fig:prim} A benzenoid system with three directions of edges.}
\end{figure}
\bigskip

\noindent
The edge set of a benzenoid system $G$ can be naturally partitioned into sets $E_1, E_2$, and $E_3$ of edges of the same direction. For $i \in \lbrace 1, 2, 3 \rbrace$, set $G_i = G - E_i$. Then the connected components of the graph $G_i$ are paths. The \textit{quotient graph} $T_i$, $i \in \{1,2,3\}$, has these paths as vertices, two such paths (i.e. components of $G_i$) $P_1$ and $P_2$ being adjacent in $T_i$ if some edge in $E_i$ joins a vertex of $P_1$ to a vertex of $P_2$. It is known that $T_1$, $T_2$, and $T_3$ are trees (see \cite{chepoi-1996}).
\bigskip

%\noindent
%An \textit{elementary cut} of a benzenoid system $G$ is a line segment that starts at
%the center of a peripheral edge of a benzenoid system $G$,
%goes orthogonal to it and ends at the first next peripheral
%edge of $G$. Details on elementary cuts can be found elsewhere (for example, see \cite{gut}). In what follows we denote by $C$ the set of edges in an elementary cut. The number of edges in $C$ will be denoted in a standard way, by $|C|$.
%
%\noindent
%The set of all elementary cuts of $G$ will be denoted by $\mathcal{C}$. Furthermore, if $i \in \lbrace 1, 2, 3 \rbrace$, the set $\mathcal{C}_i$ denotes the set of all elementary cuts $C$ of $G$ such that $C \subseteq E_i$. Then $\lbrace \mathcal{C}_1, \mathcal{C}_2, \mathcal{C}_3 \rbrace$ is a partition of the set $\mathcal{C}$. It is obvious that there is a natural bijection between elementary cuts in $\mathcal{C}_i$ and edges of $T_i$.

%%%%%%%%%%%%%%%%%%%%%%%%%%%%%%%%%%%%%%%%%%%%%%%%%%%%%%%%%%%%%%%%%%%%%
%%%%%%%%%%%%%%%%%%%%%%%%%%%%%%%%%%%%%%%%%%%%%%%%%%%%%%%%%%%%%%%%%%%%%
\section{The Szeged index}
\label{sec:szeged_index}
%%%%%%%%%%%%%%%%%%%%%%%%%%%%%%%%%%%%%%%%%%%%%%%%%%%%%%%%%%%%%%%%%%%%%
%%%%%%%%%%%%%%%%%%%%%%%%%%%%%%%%%%%%%%%%%%%%%%%%%%%%%%%%%%%%%%%%%%%%%
In this section, we prove that the Szeged index of a benzenoid system $G$ can be computed in sub-linear time. 
To prove this, we extend the main idea from papers \cite{CK-1998,DGKZ-2002} where it was proved that the Wiener index of a benzenoid system can be computed 
in sub-linear time with respect to the number of vertices. First we define the weighted quotient trees.

Let $T_1$, $T_2$, $T_3$ be the quotient trees defined in the preliminaries. 
We can extend the quotient trees to weighted trees $(T_i, w_i, w'_i)$, $i \in \{1,2,3\}$, as follows: 
\begin{itemize}
\item for $C \in V(T_i)$, let $w_i(C)$ be the number of vertices in the component $C$ of $G_i = G - E_i$;
\item for $E = C_1C_2 \in E(T_i)$, let $w_i'(E)$ be the number of edges between components $C_1$ and $C_2$.
\end{itemize}

In the following lemma we prove that the weighted trees can be obtained in $O(|Z|)$ time where $Z$ is the boundary cycle of a benzenoid system and $|Z|$ denotes the number of vertices on $Z$.
%As we mentioned above, we extend the main idea from \cite{CK-1998,DGKZ-2002},
%but for the completeness of the paper we repeat some basic steps anyway.

\begin{lemma}\label{tree_O(z)}
	Let $G$ be a benzenoid system and let $Z$ be its boundary cycle. Then each tree $(T_i, w_i, w_i')$, $i \in \{1,2,3\}$, can be obtained in $O(|Z|)$ time.	
\end{lemma}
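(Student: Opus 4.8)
The key geometric fact is that a benzenoid system $G$ is completely determined by its boundary cycle $Z$, and all of $G$ can be reconstructed from $Z$ in $O(|Z|)$ time (this is essentially the content of the reconstruction used in \cite{CK-1998}): knowing the coordinates of the vertices of $Z$ in the hexagonal lattice, one can sweep through the interior row by row and list every vertex, every edge, and every hexagon. The number of internal vertices can be $\Theta(|Z|^2)$, so I cannot afford to touch each interior vertex individually; the whole point is that the trees $(T_i,w_i,w_i')$ have size $O(|Z|)$ and I must compute the weights without enumerating $V(G)$ or $E(G)$ explicitly.

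First I would fix a direction $i\in\{1,2,3\}$ and recall that the components of $G_i = G - E_i$ are paths, each of which is a maximal straight segment of edges of the two directions other than $i$; equivalently, these are the ``cuts'' orthogonal to direction $i$. Because $G$ is an interval-like region of the lattice, every such component is a single lattice line intersected with $G$, hence is an interval determined by where it enters and leaves the interior of $Z$. So the vertices of $T_i$ correspond to the lattice lines (in the two non-$i$ directions, suitably interleaved) that meet $G$, and the edge $C_1C_2 \in E(T_i)$ corresponds to a maximal strip of hexagons separating the two consecutive lines. The plan is: (1) walk around $Z$ once, and for each of the $O(|Z|)$ lattice lines of the relevant family, record the two boundary points where that line meets $Z$ — this gives the list of components $C$ and, from the coordinates, the vertex count $w_i(C)$ in $O(1)$ per component; (2) for each pair of consecutive lines, the number of joining edges $w_i'(E)$ equals the number of hexagons in the strip between them, which again is read off in $O(1)$ from the endpoints of the two intervals. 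Summing, this is $O(|Z|)$ per direction, hence $O(|Z|)$ overall.

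The main obstacle, and the part that needs care rather than cleverness, is bookkeeping the hexagonal geometry: making precise the claim that each component of $G_i$ is an ``interval'' on a lattice line and that its two endpoints lie on $Z$ (so that a single traversal of $Z$ suffices to enumerate all components and their lengths), and similarly that the edges of $E_i$ between two consecutive components form a contiguous block whose size is the difference of two coordinates. This requires the standard facts that $G$ is simply connected, that its intersection with any lattice line is connected, and that $T_i$ is a tree — all of which are available from the preliminaries and \cite{chepoi-1996}. Once the interval structure is established, the counting formulas for $w_i$ and $w_i'$ are elementary, and the $O(|Z|)$ bound follows because there are at most $O(|Z|)$ relevant lattice lines and the total work of reading off endpoints during one traversal of $Z$ is linear in $|Z|$.

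A clean way to organize the argument is: (a) describe a coordinate system on $\mathcal H$ so that the edges of each direction $E_j$ are characterized by one coordinate being constant; (b) observe that removing $E_i$ splits $G$ along the lines transverse to direction $i$, and that on each such line $G$ induces a path whose endpoints are on $Z$; (c) traverse $Z$ once, bucketing its edges by which transverse line's endpoint they realize, thereby obtaining all components with their lengths (hence $w_i$); (d) for $w_i'$, note that two components are adjacent in $T_i$ iff their supporting lines are consecutive and their intervals overlap in the required way, and the number of connecting $E_i$-edges is the length of that overlap, again computable from the recorded endpoints; (e) conclude that each of the three trees, together with both weight functions, is produced in $O(|Z|)$ time. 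I would present (b)--(d) as the substance of the proof and leave (a) and (e) as routine.
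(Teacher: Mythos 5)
Your plan founders on the central geometric claim in step (b): that each component of $G_i=G-E_i$ is \emph{the} intersection of $G$ with a single lattice line, i.e.\ that the intersection of a benzenoid system with a lattice line transverse to direction $i$ is connected. This is false in general. A benzenoid system is simply connected but need not be convex in any of the three edge directions: take, for instance, a horseshoe-shaped (U-shaped) system consisting of a horizontal row of hexagons with two vertical arms attached at its ends. A lattice line crossing both arms meets $G$ in two disjoint segments, so two distinct components of $G_i$ lie on the same line, and a single line can meet the boundary cycle $Z$ in many points (up to $\Theta(|Z|)$ in total over all lines, with several per line). Consequently your bookkeeping breaks down exactly where the real work lies: during one traversal of $Z$ you discover the crossing points in \emph{boundary} order, not in their order along each line, so you cannot simply pair ``the two endpoints'' of a line to get a component; you must correctly match up the crossings into the maximal segments that lie inside the region bounded by $Z$. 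This matching is the nontrivial step, and it is precisely what the paper's proof delegates to Chazelle's linear-time algorithm for vertex-edge visibility/triangulation of a simple polygon: the cut segments of type $i$ subdivide the region into strips, from which the tree $\Gamma_i$, its thick/thin edges, and the weights are read off, and contracting thin edges yields $(T_i,w_i,w_i')$. Your proposal invokes ``its intersection with any lattice line is connected'' as a standard fact, so the gap is not a routine omission but a step that fails.

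Two smaller points. First, your opening claim that all of $G$ can be reconstructed from $Z$ in $O(|Z|)$ time is not literally true (as you yourself note, $|V(G)|$ can be $\Theta(|Z|^2)$); only an implicit representation such as the strip/cut-segment structure can be produced in $O(|Z|)$ time, which again is the content of the step you are missing. Second, even in the convex situation your edge-weight formula needs care: the number of $E_i$-edges joining two adjacent components is the number of hexagons in the separating strip plus one, not the number of hexagons — a minor slip, but symptomatic of the fact that the counting must be set up via the strips (as in the paper, where $w_i'$ is the number of edges of $G$ lying in the rectangular strip), not via hexagon counts. If you want to avoid Chazelle, a possible repair of your sweep would be to collect all boundary crossings of each line and bucket-sort them along the line using their integer lattice coordinates (which lie in a range of size $O(|Z|)$), pairing consecutive crossings by a parity argument; but that argument is not in your write-up, and as written the proof does not go through.
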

\begin{proof}
Let $i \in \{1,2,3\}$. We will describe the construction of  vertex-edge weighted tree $(T_i, w_i, w_i')$ that depends only on the boundary cycle $Z$.
	
	\noindent
	The main idea is based on Chazelle algorithm \cite{chazelle} for computing all vertex-edge visible pairs of edges of a simple (finite) polygon. More precisely, we apply the algorithm of Chazelle for the direction $E_i$ as follows. Let ${\cal D}$ be the region in the plane bounded by cycle $Z$ such that $Z$ is included in $\cal{D}$. We define a \textit{cut segment} as a straight line segment lying completely in ${\cal D}$ and connecting two distinct vertices of $Z$. A \textit{cut segment of type $i$} is a cut segment perpendicular to the edges from the set $E_i$. We divide ${\cal D}$ into strips by all the cut segments of type $i$, see an example in Figure \ref{fig:sub}. Note that some strips can be triangles. Such a subdivision of ${\cal D}$ will be denoted by ${\cal D}_i$. Moreover, any strip of ${\cal D}_i$ can take two values: $1$ and $\frac{1}{2}$. The strip takes value $1$ if it is a rectangle and value $\frac{1}{2}$ otherwise. 
	
	\begin{figure}[h!] 
\begin{center}
\includegraphics[scale=0.8]{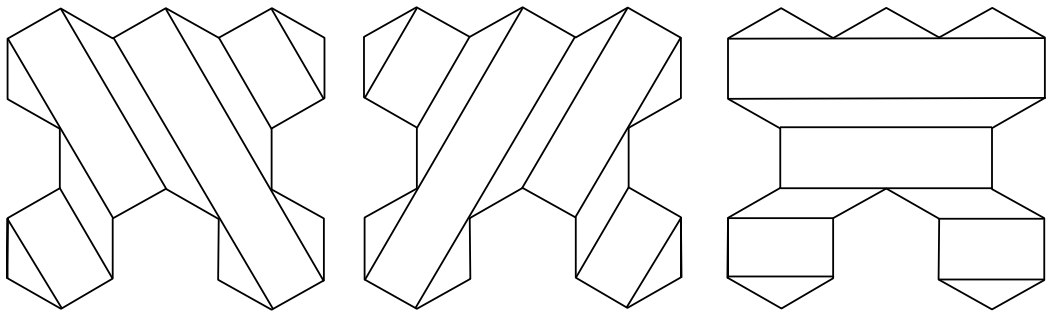}
\end{center}
\caption{\label{fig:sub} Subdivisions ${\cal D}_1$, ${\cal D}_2$, and ${\cal D}_3$ of a benzenoid system from Figure \ref{fig:prim}.}
\end{figure}

	Let ${\cal C}_i$ be the set of all cut segments of type $i$. Furthermore, let ${\cal C}_i'$ be the set of all vertices of $Z$ that are not on any cut segment of type $i$. Now we define a new graph $\Gamma_i$ whose vertices are the elements in the set ${\cal C}_i \cup {\cal C}_i'$ and two vertices of $\Gamma_i$ are adjacent if and only if the corresponding elements belong to a common strip of ${\cal D}_i$. From the definition of $\Gamma_i$ it follows that it is a tree. Note that $\Gamma_i$ can be derived from ${\cal D}_i$ in linear time.

	An edge of $\Gamma_i$ is called {\em thick} if it is defined by a strip with value $1$ and {\em thin} otherwise. Every cut segment of ${\cal C}_i$ is incident to exactly one thick edge, all remaining vertices of $\Gamma_i$ being incident only to thin edges. Moreover, we define a weight for each thick edge $f$ as follows. Let ${\cal F}$ be the rectangular strip of ${\cal D}_i$ corresponding to $f$. Edge $f$ gets a weight equal to the number of edges in $G$ that lie completely in ${\cal F}$.

Finally, we define a weight for all the vertices of $\Gamma_i$ as follows. Any element of ${\cal C}_i'$ gets weight $1$ and any element of ${\cal C}_i$ gets the same weight as the thick edge incident to it. Obviously, the weight of any vertex in ${\cal C}_i$ represents the number of vertices of $G$ lying on the corresponding cut segment. If we contract all thin edges of $\Gamma_i$ and use the described weights also on the new tree, we obtain the vertex-edge weighted tree (see Figure \ref{fig:trees1}). It is obvious from the construction that the obtained tree is exactly $(T_i,w_i,w_i')$.

	\begin{figure}[h!] 
\begin{center}
\includegraphics[scale=1.2]{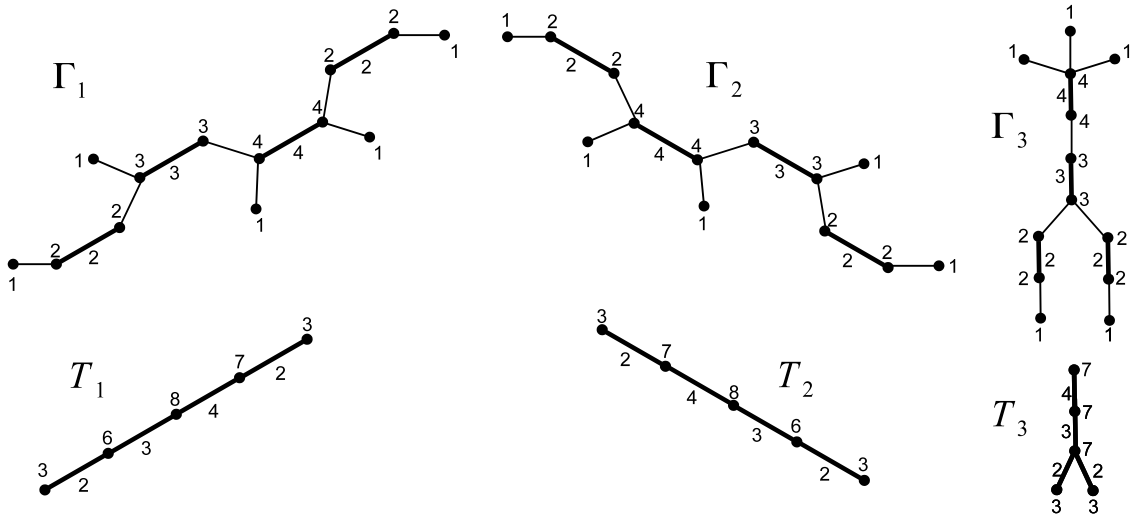}
\end{center}
\caption{\label{fig:trees1} Weighted trees $\Gamma_i$ and $T_i$, $i \in \lbrace 1,2,3 \rbrace$, of a benzenoid system from Figure \ref{fig:prim}.}
\end{figure}

	From this construction it follows that $(T_i,w_i,w_i')$ can be obtained in $O(|Z|)$ time.
\qed
\end{proof}

Next, we define the Szeged index of a vertex-edge weighted graph $(G,w,w')$ as
$$ Sz(G,w,w')  =  \sum_{e \in E(G)} w'(e)n_1(e|G)n_2(e|G), $$
where for $i \in \lbrace 1,2 \rbrace$ we have
$$n_i(e|G)  =  \sum_{x \in N_i(e|G)} w(x).$$

\noindent The following theorem is important for our consideration. 
\begin{theorem} \cite{chepoi-1997,NM-2017}
\label{szeged_izrek} 
If $G$ is a benzenoid system, then
$$ Sz(G)= Sz(T_1,w_1,w_1') + Sz(T_2, w_2,w_2') + Sz(T_3, w_3, w_3').$$
\end{theorem}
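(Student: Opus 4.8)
The plan is to use the cut method. Recall first that every benzenoid system $G$ is a partial cube, hence bipartite, and that the Djokovi\'{c}--Winkler relation $\Theta$ on $E(G)$ is transitive; its classes are exactly the elementary cuts of $G$, and deleting the edges of a single $\Theta$-class $F$ leaves precisely two connected components, say $W_1(F)$ and $W_2(F)$. The first goal is the identity
$$
Sz(G) \;=\; \sum_{F} |F|\cdot\bigl|W_1(F)\bigr|\cdot\bigl|W_2(F)\bigr|,
$$
the sum ranging over all $\Theta$-classes $F$ of $G$.

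To prove this, fix an edge $e=uv$. Since $G$ is bipartite, no vertex of $G$ is equidistant from $u$ and $v$, so $V(G)$ is the disjoint union of $N_1(e|G)$ and $N_2(e|G)$. Since $G$ is a partial cube, for every edge $f$ in the $\Theta$-class of $e$ the pair $\{N_1(f|G),N_2(f|G)\}$ coincides with $\{N_1(e|G),N_2(e|G)\}$, and this pair equals $\{W_1(F),W_2(F)\}$ where $F$ is that class; this is the familiar description of the two sides of a cut in a partial cube. Hence the summand $|N_1(e|G)|\cdot|N_2(e|G)|$ in $Sz(G)=\sum_{e\in E(G)}|N_1(e|G)|\cdot|N_2(e|G)|$ is constant over the $|F|$ edges of each class $F$ and equals $|W_1(F)|\cdot|W_2(F)|$; summing within classes and then over all classes gives the displayed formula.

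Next I would reorganize this sum by the three edge directions. Opposite edges of a hexagon share a direction, so an elementary cut never changes direction; thus each $\Theta$-class lies in exactly one of $E_1,E_2,E_3$, and $E_i$ is the disjoint union of the $\Theta$-classes it contains. Fix $i$. The key claim is that these $\Theta$-classes are in bijection with the edges of $T_i$: a class $F\subseteq E_i$ consists exactly of the edges of $E_i$ joining two fixed components $C_1,C_2$ of $G_i=G-E_i$, and $E:=C_1C_2$ is then an edge of $T_i$; conversely every edge of $T_i$ arises this way. Under this correspondence $|F|=w_i'(E)$ by the definition of $w_i'$, while $W_1(F)$ and $W_2(F)$ are the unions of those components of $G_i$ that lie in the two subtrees of $T_i-E$, so $|W_1(F)|$ and $|W_2(F)|$ are precisely $n_1(E|T_i)$ and $n_2(E|T_i)$. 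Substituting, the direction-$i$ part of the sum becomes $\sum_{E\in E(T_i)}w_i'(E)\,n_1(E|T_i)\,n_2(E|T_i)=Sz(T_i,w_i,w_i')$, and adding the three directions proves the theorem.

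The main obstacle is the structural claim of the previous paragraph: that the edges of $E_i$ running between two adjacent components of $G_i$ form a single $\Theta$-class, and equivalently that cutting that class partitions $V(G)$ in the same way the corresponding edge of $T_i$ partitions the ($w_i$-weighted) vertex set of $T_i$. This is exactly where the geometry of benzenoid systems is used — it rests on the facts established in \cite{chepoi-1996} that each $T_i$ is a tree and that the components of $G_i$ are paths met transversally by the cuts of direction $i$. Once this dictionary between $\Theta$-classes of a fixed direction and edges of the corresponding quotient tree is in place, everything else is routine bookkeeping; alternatively, one can derive the identity from the general theory of topological indices of partial cubes via their weighted quotient trees.
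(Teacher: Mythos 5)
The paper does not prove Theorem~\ref{szeged_izrek} itself but imports it from \cite{chepoi-1997,NM-2017}, and your argument is essentially the proof given there: the cut-method identity $Sz(G)=\sum_F |F|\,|W_1(F)|\,|W_2(F)|$ over $\Theta$-classes, followed by the dictionary between the $\Theta$-classes of direction $i$ and the edges of the weighted quotient tree $(T_i,w_i,w_i')$. Your reasoning is correct, and you rightly isolate the one genuinely structural ingredient (each pair of adjacent components of $G_i$ is joined by exactly one elementary cut, so cuts of direction $i$ biject with $E(T_i)$) and attribute it to the geometry of benzenoid systems as developed in \cite{chepoi-1996}.
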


\noindent To compute the Szeged index of benzenoid systems in sub-linear time, we also need the next lemma.

\begin{lemma} \cite{chepoi-1997,NM-2017}
\label{lema_sz}
Let $(T,w,w')$ be a vertex-edge weighted tree with $n$ vertices. Then the Szeged index $Sz(T,w,w')$ can be computed in $O(n)$ time.
\end{lemma}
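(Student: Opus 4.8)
The plan is to reduce the computation to a single depth-first traversal by exploiting that every edge of a tree is a cut edge. First I would record the following elementary fact. Fix $e = uv \in E(T)$ and let $T_u, T_v$ be the two components of $T - e$, with $u \in V(T_u)$ and $v \in V(T_v)$. For any $x \in V(T_u)$ the unique $x$--$v$ path in $T$ runs through $u$, so $d_T(x,v) = d_T(x,u) + 1 > d_T(x,u)$; hence $x \in N_1(e|T)$, and symmetrically $V(T_v) \subseteq N_2(e|T)$. Since $V(T_u)$ and $V(T_v)$ partition $V(T)$, this forces $N_1(e|T) = V(T_u)$ and $N_2(e|T) = V(T_v)$. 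Writing $W = \sum_{x \in V(T)} w(x)$ for the total vertex weight and $s(e) = \sum_{x \in V(T_u)} w(x)$, we then have $n_1(e|T) = s(e)$ and $n_2(e|T) = W - s(e)$, so
$$ Sz(T,w,w') = \sum_{e = uv \in E(T)} w'(e)\, s(e)\, \bigl(W - s(e)\bigr). $$

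The remaining task is to compute $W$ and all the numbers $s(e)$ in $O(n)$ time. I would root $T$ at an arbitrary vertex $r$ and run a post-order depth-first search, computing for each vertex $v$ the total weight $\sigma(v)$ of the subtree rooted at $v$ by the recurrence $\sigma(v) = w(v) + \sum_{c} \sigma(c)$, where $c$ ranges over the children of $v$; this uses constant work per vertex and per edge, hence $O(n)$ overall, and $W = \sigma(r)$. For a non-root vertex $v$, let $e_v$ be the edge joining $v$ to its parent; the component of $T - e_v$ containing $v$ is exactly the subtree rooted at $v$, so $s(e_v) = \sigma(v)$. A second $O(n)$ pass over the vertices then accumulates
$$ Sz(T,w,w') = \sum_{v \in V(T) \setminus \{r\}} w'(e_v)\, \sigma(v)\, \bigl(W - \sigma(v)\bigr), $$
which, under the customary assumption that each arithmetic operation costs $O(1)$ (the same convention used for the other linear-time results cited in the paper), gives the claimed bound.

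I do not expect a serious obstacle here: the whole content is the observation that $N_1(e|T)$ coincides with one side of the cut $T - e$, which is specific to trees and fails for general graphs, while everything else is a textbook linear-time tree traversal. The only point meriting a line of care is bookkeeping during the DFS — associating each weight $w'(e_v)$ with the correct parent edge and having $W$ available before the accumulation pass begins.
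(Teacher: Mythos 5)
Your proposal is correct: the key observation that for a tree edge $e=uv$ the sets $N_1(e|T)$ and $N_2(e|T)$ are exactly the vertex sets of the two components of $T-e$, combined with a single post-order traversal computing subtree weights $\sigma(v)$ and the accumulation $\sum_{v\neq r} w'(e_v)\,\sigma(v)\,(W-\sigma(v))$, is precisely the standard linear-time argument. The paper itself gives no proof of this lemma, delegating it to the cited references, and your argument matches the approach used there, so there is nothing to add.
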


\noindent The main result of this section now follows easily.

\begin{theorem}
Let $G$ be a benzenoid system with the boundary cycle $Z$. Then the Szeged index $Sz(G)$ can be computed in $O(|Z|)$ time.
\end{theorem}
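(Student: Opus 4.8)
The plan is to combine the three ingredients already in place: Lemma~\ref{tree_O(z)}, Theorem~\ref{szeged_izrek}, and Lemma~\ref{lema_sz}. First I would invoke Lemma~\ref{tree_O(z)} to obtain, for each $i \in \{1,2,3\}$, the vertex-edge weighted quotient tree $(T_i, w_i, w_i')$ in $O(|Z|)$ time; doing this for all three directions still costs $O(|Z|)$ time. Next, each $T_i$ arises from contracting thin edges of the tree $\Gamma_i$, whose vertex set is ${\cal C}_i \cup {\cal C}_i'$, a set whose size is bounded by the number of vertices on $Z$ (each element of ${\cal C}_i'$ is a vertex of $Z$, and each cut segment in ${\cal C}_i$ has its two endpoints on $Z$); hence $T_i$ has $n_i = O(|Z|)$ vertices. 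Then I would apply Lemma~\ref{lema_sz} to compute $Sz(T_i, w_i, w_i')$ in $O(n_i) = O(|Z|)$ time for each of the three trees, and finally invoke Theorem~\ref{szeged_izrek} to sum the three values, obtaining $Sz(G) = Sz(T_1,w_1,w_1') + Sz(T_2,w_2,w_2') + Sz(T_3,w_3,w_3')$ in one extra $O(1)$ step. The total running time is $O(|Z|)$.

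The main point that needs a careful word is the bound on $|V(T_i)|$ in terms of $|Z|$: one must check that the number of cut segments of type $i$ plus the number of boundary vertices not lying on any such segment is $O(|Z|)$, so that the input size for Lemma~\ref{lema_sz} is indeed $O(|Z|)$ rather than something depending on the full vertex count of $G$. This follows from the construction in Lemma~\ref{tree_O(z)}, since distinct cut segments have distinct pairs of endpoints on $Z$ and every vertex of $Z$ either lies on some cut segment or is counted in ${\cal C}_i'$; contracting thin edges only decreases the vertex count. I expect this to be the only genuine obstacle, and it is a mild one. The rest is a direct concatenation of the quoted results, so the proof is short.

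\begin{proof}
By Lemma~\ref{tree_O(z)}, for each $i \in \{1,2,3\}$ the vertex-edge weighted tree $(T_i, w_i, w_i')$ can be obtained in $O(|Z|)$ time. In particular, $T_i$ is a minor of the tree $\Gamma_i$ from the proof of that lemma, whose vertices are the elements of ${\cal C}_i \cup {\cal C}_i'$. Since every cut segment of type $i$ is determined by its two endpoints on $Z$ and every vertex of ${\cal C}_i'$ lies on $Z$, we have $|V(\Gamma_i)| = O(|Z|)$, and contracting the thin edges only decreases the number of vertices, so $T_i$ has $n_i = O(|Z|)$ vertices. By Lemma~\ref{lema_sz}, $Sz(T_i, w_i, w_i')$ can therefore be computed in $O(n_i) = O(|Z|)$ time. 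Finally, by Theorem~\ref{szeged_izrek},
$$Sz(G) = Sz(T_1, w_1, w_1') + Sz(T_2, w_2, w_2') + Sz(T_3, w_3, w_3'),$$
so $Sz(G)$ is obtained by summing three numbers, each computed in $O(|Z|)$ time. Hence $Sz(G)$ can be computed in $O(|Z|)$ time.
\qed
\end{proof}
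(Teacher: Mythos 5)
Your proposal is correct and follows essentially the same route as the paper: construct the three weighted quotient trees via Lemma~\ref{tree_O(z)}, compute each $Sz(T_i,w_i,w_i')$ via Lemma~\ref{lema_sz}, and sum using Theorem~\ref{szeged_izrek}. Your explicit check that $|V(T_i)| = O(|Z|)$ is a detail the paper leaves implicit in the construction of $\Gamma_i$, but it is the same argument.
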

\begin{proof}
From Lemma \ref{tree_O(z)} it follows that the trees $(T_1,w_1,w_1')$, $(T_2, w_2,w_2')$, and $(T_3, w_3, w_3')$ can be obtained in $O(|Z|)$ time. By Lemma \ref{lema_sz}, the Szeged index of each mentioned tree can be computed in linear time with respect to $|Z|$. Finally, using  Theorem \ref{szeged_izrek}, the Szeged index $Sz(G)$ can be computed in $O(|Z|)$ time.
%The sum of edges in all quotient trees is exactly $\frac{|Z|}{2}$. Therefore, there is $O(|Z|)$ vertices (edges) in the graph $T_i$, $i \in \lbrace 1, 2, 3 \rbrace$.
\qed
\end{proof}

%%%%%%%%%%%%%%%%%%%%%%%%%%%%%%%%%%%%%%%%%%%%%%%%%%%%%%%%%%%%%%%%%%%%%
%%%%%%%%%%%%%%%%%%%%%%%%%%%%%%%%%%%%%%%%%%%%%%%%%%%%%%%%%%%%%%%%%%%%%
\section{The edge-Wiener index, the edge-Szeged index, and the PI index}
%%%%%%%%%%%%%%%%%%%%%%%%%%%%%%%%%%%%%%%%%%%%%%%%%%%%%%%%%%%%%%%%%%%%%
%%%%%%%%%%%%%%%%%%%%%%%%%%%%%%%%%%%%%%%%%%%%%%%%%%%%%%%%%%%%%%%%%%%%%

In the present section we show that the edge-Wiener index, the edge-Szeged index, and the PI index of a benzenoid system $G$ can be computed in sub-linear time. As in previous section we construct weighted trees. However, the weights are not the same as before.

\bigskip

Let $T_1$, $T_2$, $T_3$ be the quotient trees defined in the preliminaries. In this section we extend the quotient trees to weighted trees $(T_i, w_i)$, $(T_i, w'_i)$, $(T_i, w_i, w'_i)$ as follows: 
\begin{itemize}
\item for $C \in V(T_i)$, let $w_i(C)$ be the number of edges in the component $C$ of $G_i = G  - E_i$;
\item for $E = C_1C_2 \in E(T_i)$, let $w_i'(E)$ be the number of edges between components $C_1$ and $C_2$.
\end{itemize}

In the next lemma we prove that each of the trees $(T_1, w_1, w_1')$, $(T_2, w_2, w_2')$, and $(T_3, w_3, w_3')$ can be obtained in sub-linear time with respect to the number of vertices in the benzenoid system. Note that the first part of the proof of the following lemma is the same as in the proof of Lemma \ref{tree_O(z)}. For this reason, we skip some steps of the proof. 
\begin{lemma}\label{T_O(z)}
	Let $G$ be a benzenoid system and let $Z$ be its boundary cycle. Then each tree $(T_i, w_i, w_i')$, $i \in \{1,2,3\}$, can be obtained in $O(|Z|)$ time.	
\end{lemma}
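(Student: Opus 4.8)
The plan is to reuse the construction from the proof of Lemma~\ref{tree_O(z)} almost verbatim and to change only the vertex weights. Fix $i \in \{1,2,3\}$. First I would repeat, word for word, the geometric part of that proof: let $\mathcal{D}$ be the region bounded by the boundary cycle $Z$, apply Chazelle's algorithm \cite{chazelle} for the direction $E_i$ to obtain the subdivision $\mathcal{D}_i$ of $\mathcal{D}$ into (rectangular and triangular) strips, build the tree $\Gamma_i$ on the vertex set $\mathcal{C}_i \cup \mathcal{C}_i'$, classify its edges as thick (rectangular strip, value $1$) or thin (value $\tfrac12$), and give each thick edge $f$ the weight equal to the number of edges of $G$ lying completely in the corresponding rectangular strip $\mathcal{F}$. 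As already observed in Lemma~\ref{tree_O(z)}, all of this is done in $O(|Z|)$ time, and in particular $T_i$ has $O(|Z|)$ vertices.

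The edge weights are unchanged: for $E = C_1C_2 \in E(T_i)$ we still want $w_i'(E)$ to be the number of edges of $G$ between $C_1$ and $C_2$, which is exactly what the thick-edge weights of $\Gamma_i$ encode after the thin edges are contracted. Hence this part of the argument is literally identical to that of Lemma~\ref{tree_O(z)} and needs no modification.

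Only the vertex weights differ, and the one extra observation is this: every component $C$ of $G_i = G - E_i$ is a path, so the number of edges of $C$ equals the number of vertices of $C$ minus one. (In a benzenoid system every vertex has degree $2$ or $3$, and its incident edges lie in pairwise distinct direction classes; hence deleting $E_i$ removes at most one edge at each vertex, no component of $G_i$ is a single vertex, and the new vertex weights are all positive.) Therefore I would first run the vertex-weighting step of Lemma~\ref{tree_O(z)} --- give every vertex of $\Gamma_i$ in $\mathcal{C}_i'$ weight $1$ and every vertex in $\mathcal{C}_i$ the weight of its unique incident thick edge, then contract all thin edges --- which yields a vertex-edge weighted tree in which the vertex corresponding to a component $C$ carries weight $|V(C)|$; and then simply decrease every vertex weight by $1$. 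Since $T_i$ has $O(|Z|)$ vertices, this post-processing costs $O(|Z|)$, so the whole construction runs in $O(|Z|)$ time and produces precisely $(T_i, w_i, w_i')$ with the weights defined in this section.

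The only genuine point to be checked --- and it is inherited from Lemma~\ref{tree_O(z)} --- is that contracting the thin edges of $\Gamma_i$ makes the vertices of the resulting tree correspond to the path-components of $G_i$ with vertex weight exactly $|V(C)|$; once that is granted, the correctness of the $-1$ correction and the $O(|Z|)$ running time are immediate. I expect this bookkeeping (matching the merged classes of cut segments and free boundary vertices to components of $G_i$) to be the main, though essentially routine, obstacle.
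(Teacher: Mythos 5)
Your proposal is correct, and it shares the paper's skeleton (reuse the Chazelle-based tree $\Gamma_i$ from Lemma~\ref{tree_O(z)}, same edge weights, same $O(|Z|)$ bookkeeping), but it handles the one genuinely new ingredient of this lemma --- the vertex weights counting \emph{edges} rather than vertices of a component --- by a different argument. The paper assigns weights to the thin edges of $\Gamma_i$ as well (the number of edges of $G$ lying completely in the corresponding triangular strip) and, after contracting the thin edges, defines each vertex weight of $T_i$ as the sum of the thin-edge weights absorbed into that vertex; so the edge counts are read off directly from the geometric subdivision ${\cal D}_i$. You instead first produce the Section~3 tree, in which the vertex of $T_i$ corresponding to a component $C$ of $G_i$ carries weight $|V(C)|$, and then subtract $1$ from every vertex weight, justified by the fact that each component of $G_i$ is a path, together with the observation (degrees $2$ or $3$, incident edges in pairwise distinct direction classes) that no component is an isolated vertex, so the corrected weights equal $|E(C)|$ and remain positive; since $T_i$ has $O(|Z|)$ vertices this costs only $O(|Z|)$. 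Both routes are valid and give the same tree $(T_i,w_i,w_i')$ in $O(|Z|)$ time: yours is a bit more elementary (no counting of edges inside triangular strips is needed) but leans on the specific path structure of the components of $G_i$, while the paper's strip-counting obtains the edge weights of the components directly from ${\cal D}_i$ and does not use that the components are trees.
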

\begin{proof}
Let $i \in \{1,2,3\}$ and define the tree $\Gamma_i$ exactly as in the proof of Lemma \ref{tree_O(z)}. Now, we define a weight just for all the edges of the graph $\Gamma_i$. If $f$ is an edge of $\Gamma_i$, let ${\cal F}$ be a strip of ${\cal D}_i$ corresponding to $f$. Edge $f$ gets a weight equal to the number of edges of $G$ that lie completely in ${\cal F}$. Note that the thick edges are weighted as before.
	
Finally, we contract all thin edges of $\Gamma_i$ and obtain a weighted tree in the following way. Every edge of a new tree is weighted by the weight of the corresponding thick edge. The weight of any vertex of this new tree is defined as the sum of all the weights of thin edges incident to the corresponding vertex in $\Gamma_i$ (see Figure \ref{fig:trees2}). It is obvious from the construction that the obtained tree is exactly $(T_i,w_i,w_i')$.

	\begin{figure}[h!] 
\begin{center}
\includegraphics[scale=1.2]{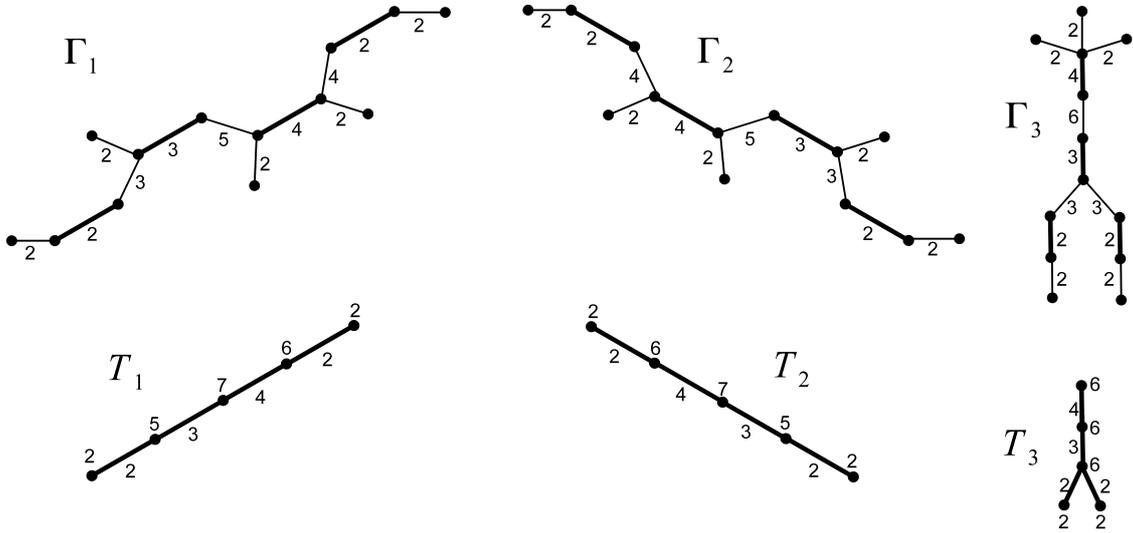}
\end{center}
\caption{\label{fig:trees2} Weighted trees $\Gamma_i$ and $T_i$, $i \in \lbrace 1,2,3 \rbrace$, of a benzenoid system from Figure \ref{fig:prim}.}
\end{figure}

From this construction it follows that $(T_i,w_i,w_i')$ can be obtained in $O(|Z|)$ time.
\qed
\end{proof}

\subsection{The edge-Wiener index}

In the preliminaries we defined the edge-Wiener index in such a way that the distance between two edges of a graph is equal to the distance between corresponding vertices in the line graph. On the other hand, for edges $e = ab$ and $f = xy$ of a graph $G$ it is also legitimate to set

$$\widehat{d}_G(e,f) = \min \lbrace d_G(a,x), d_G(a,y), d_G(b,x), d_G(b,y) \rbrace.$$

\noindent
Obviously, for any two distinct edges $e,f \in E(G)$ it holds $d_G(e,f) = \widehat{d}_G(e,f) + 1$. Replacing $d$ with $\widehat{d}$ in~\eqref{def_ew}, a variant of the edge-Wiener index from~\cite{khalifeh-2009} is obtained, let us denote it with $\widehat{W}_e(G)$. It is easy to see that $W_e(G)$ and $\widehat{W}_e(G)$ are connected in the following way (cf.~\cite[Corollary 8]{iranmanesh-2009} and~\cite[Theorem 2.4]{khalifeh-2009}):

$$\widehat{W}_e(G) =  W_e(G) - \binom{|E(G)|}{2}.$$

The definitions of the Wiener indices can be extended to weighted graphs as follows. Let $G$ be a graph and let $(G,w)$, $(G,w')$, and $(G,w,w')$ be a vertex-weighted graph, an edge-weighted graph, and a vertex-edge weighted graph, respectively. The corresponding Wiener indices of these weighted graphs are defined as
\begin{eqnarray*}
W(G,w) & = & \frac{1}{2} \sum_{x \in V(G)} \sum_{y \in V(G)} w(x)w(y)d_G(x,y)\,, \\
\widehat{W}_e(G,w') & = & \frac{1}{2} \sum_{e \in E(G)} \sum_{f \in E(G)} w'(e)w'(f)\widehat{d}_G(e,f)\,,\\
W_{ve}(G,w,w') & = & \sum_{x \in V(G)} \sum_{e \in E(G)} w(x)w'(e) d_G(x,e)\,. 
\end{eqnarray*}

\noindent The following result says that the edge-Wiener index of a benzenoid system can be computed as the sum of Wiener indices of weighted quotient trees.

\begin{theorem} \cite{kelenc}
\label{eW_izrek}
If $G$ is a benzenoid system, then 
$$\widehat{W}_e(G)= \sum_{i=1}^3 \left( \widehat{W}_e(T_i, w_i') + W(T_i, w_i) + W_{ve}(T_i, w_i, w_i')\right).$$
\end{theorem}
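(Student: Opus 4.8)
The plan is to establish this identity by the \emph{cut method}, carried out at the level of edges rather than vertices. Recall that a benzenoid system $G$ is a partial cube, so that $d_G(u,v)$ equals the number of Djokovi\'c--Winkler $\Theta$-classes of $G$ separating $u$ from $v$, every $\Theta$-class is a cut whose removal leaves exactly two components, and every $\Theta$-class is contained in one of the direction classes $E_1,E_2,E_3$. The $\Theta$-classes lying inside $E_i$ are exactly the elementary cuts perpendicular to $E_i$; by the construction underlying $T_i$ they are in bijection with the edges of $T_i$, the number of edges of such a class being the weight $w_i'$ of the corresponding edge of $T_i$, and, dually, partitioning the edges of $G$ not in $E_i$ according to the component of $G_i$ containing them identifies them with the vertices of $T_i$, the number of edges of a component being the weight $w_i$ of the corresponding vertex. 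Thus each edge of $G$ becomes an edge of exactly one tree $T_j$ and a vertex of each of the other two. Fixing $i$, for an edge $g$ of $G$ write $C_g\in V(T_i)$ for the component of $G_i$ containing $g$ when $g\notin E_i$, and $E_g\in E(T_i)$ for the edge of $T_i$ determined by $g$ when $g\in E_i$.

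The first step is a cut formula for the edge-distance: for edges $e=ab$ and $f=xy$ of $G$,
$$\widehat{d}_G(e,f)=\bigl|\{\,F\ \text{a}\ \Theta\text{-class of}\ G:\ \{a,b\}\ \text{and}\ \{x,y\}\ \text{lie on opposite sides of}\ F\,\}\bigr|.$$
The inequality ``$\ge$'' is immediate from the partial-cube distance formula: if $\{a,b\}$ and $\{x,y\}$ lie on opposite sides of $F$, then every endpoint of $e$ is separated from every endpoint of $f$ by $F$, so each such $F$ contributes $1$ to $d_G(p,q)$ for all $p\in\{a,b\}$, $q\in\{x,y\}$. For ``$\le$'', take $p^\ast$ to be the endpoint of $e$ on the same side as $f$ of the $\Theta$-class containing $e$ (and, if $e$ and $f$ share that class, the endpoint of $e$ on the same side as a fixed endpoint of $f$), and $q^\ast$ analogously; one checks that the $\Theta$-classes separating $p^\ast$ from $q^\ast$ are precisely those on the right-hand side, so $\widehat{d}_G(e,f)\le d_G(p^\ast,q^\ast)$ equals that count. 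Splitting the $\Theta$-classes by direction gives $\widehat{d}_G(e,f)=\sum_{i=1}^3\delta_i(e,f)$, where $\delta_i(e,f)$ counts the opposite-side $\Theta$-classes contained in $E_i$.

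The second step rewrites $\delta_i(e,f)$ as a distance inside $T_i$, using that deleting a $\Theta$-class of $E_i$ from $G$ induces exactly the two-subtree split of $T_i$ obtained by deleting the corresponding edge, with the two sides in $G$ projecting onto the two subtrees. Hence $\delta_i(e,f)=d_{T_i}(C_e,C_f)$ if $e,f\notin E_i$; $\delta_i(e,f)=\widehat{d}_{T_i}(E_e,E_f)$ if $e,f\in E_i$ (in particular $0$ if $e,f$ lie in the same elementary cut); and $\delta_i(e,f)$ equals the vertex--edge distance in $T_i$ between $C_e$ and $E_f$ (respectively $E_e$ and $C_f$) in the remaining two mixed cases. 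I would then sum $\widehat{d}_G(e,f)=\sum_i\delta_i(e,f)$ over all ordered pairs $(e,f)\in E(G)^2$, and for each fixed $i$ split the inner sum according to whether $e,f$ lie in $E_i$: grouping edges of $G$ by their image vertex or edge in $T_i$, with multiplicities $w_i$ and $w_i'$ — exactly the bookkeeping the weights were designed for — turns the four parts into $2\,W(T_i,w_i)$, $2\,\widehat{W}_e(T_i,w_i')$, and $2\,W_{ve}(T_i,w_i,w_i')$ (the last picking up a factor $2$ from the two symmetric mixed cases). Since $2\widehat{W}_e(G)=\sum_{e}\sum_{f}\widehat{d}_G(e,f)$, dividing by $2$ gives the claimed formula.

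The routine parts are the summation rearrangement and the ``$\ge$'' half of the cut formula. The hard part will be the second step: verifying carefully that the two sides of a $\Theta$-class of $G$ correspond to the two subtrees of $T_i$ split at the corresponding edge, and then checking in each of the four cases that the number of edges of $G$ mapping to a given vertex or edge of $T_i$ is governed exactly by $w_i$ and $w_i'$ (so that diagonal terms vanish and the two mixed cases coincide). The structural inputs that make this work — that the edges of $T_i$ are in bijection with the elementary cuts contained in $E_i$, and that each such cut separates $G$ into exactly two parts, each a union of components of $G_i$ — come from the description of benzenoid systems as partial cubes together with the construction of the quotient trees recalled above.
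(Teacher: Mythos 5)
Your argument is correct, and it is essentially the proof given in the cited source \cite{kelenc} (the present paper only states the theorem with that citation): there, too, one uses that a benzenoid system is a partial cube whose $\Theta$-classes are the elementary cuts partitioned by the three edge directions, expresses $\widehat{d}_G(e,f)$ as the number of cuts with both endpoints of $e$ on one side and both endpoints of $f$ on the other, identifies the direction-$i$ contribution with (vertex, edge, or mixed) distances in the weighted quotient tree $(T_i,w_i,w_i')$, and sums over ordered pairs to obtain $\widehat{W}_e(T_i,w_i')+W(T_i,w_i)+W_{ve}(T_i,w_i,w_i')$ for each $i$. Your bookkeeping of the weights and of the two mixed cases is accurate, so no gap remains beyond the standard structural facts you correctly flag as known (each elementary cut lies in one $E_i$, separates $G$ into two parts, and corresponds bijectively to an edge of $T_i$).
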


\noindent The following lemma claims that the Wiener indices of weighted quotient trees can be computed in linear time.

\begin{lemma} \cite{kelenc}
\label{eW_lin}
Let $(T,w,w')$ be a vertex-edge-weighted tree with $n$ vertices and $m$ edges. Then the indices $\widehat{W}_e(T, w')$, $W(T, w)$, and $W_{ve}(T, w, w')$ can be computed in $O(n) = O(m)$ time.
\end{lemma}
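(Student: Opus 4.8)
The plan is to use the standard ``cut'' description of distances in a tree, which rewrites each of the three weighted Wiener-type indices as a single sum over the edges of $T$; all three sums can then be evaluated by one depth-first traversal. First I would root $T$ at an arbitrary vertex $r$, and for every vertex $v\neq r$ write $g_v$ for the edge joining $v$ to its parent and $T_v$ for the subtree of $T$ rooted at $v$. The map $v\mapsto g_v$ is a bijection from $V(T)\setminus\{r\}$ onto $E(T)$, and $T-g_v$ has exactly the two components $T_v$ and $T-V(T_v)$. A single postorder pass then computes, in $O(n)$ time, the subtree sums $s(v)=\sum_{x\in V(T_v)}w(x)$ and $t(v)=\sum_{e\in E(T_v)}w'(e)$ (each obeys an obvious recursion over the children of $v$), together with the totals $W_{\mathrm{tot}}=\sum_{x\in V(T)}w(x)$ and $W'_{\mathrm{tot}}=\sum_{e\in E(T)}w'(e)$. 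Since $T$ is a tree, $m=n-1$, so $O(n)=O(m)$.

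Next I would record the elementary fact that in a tree every relevant distance simply counts the edges separating the two objects: if $e,f$ are distinct edges, then $\widehat d_T(e,f)$ equals the number of edges $g$ for which $e$ and $f$ lie in different components of $T-g$; if $x$ is a vertex and $e$ an edge, then $d_T(x,e)$ equals the number of edges $g$ for which $x$ lies in one component of $T-g$ and both endpoints of $e$ lie in the other; and if $x,y$ are distinct vertices, then $d_T(x,y)$ equals the number of edges $g$ for which $x$ and $y$ lie in different components of $T-g$. Each of these follows at once from the uniqueness of paths in $T$ together with the fact that any single edge of $T$ other than $g$ lies entirely on one side of $T-g$; the only thing to watch is the degenerate case in which the two objects share a vertex, where the distance is $0$ and indeed no edge separates them. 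Substituting these counts into the definitions of $W(T,w)$, $\widehat W_e(T,w')$, and $W_{ve}(T,w,w')$, exchanging the order of summation so that the sum over separating edges $g$ becomes the outermost sum, and reading off the weight totals of the two sides of $T-g_v$ (the side $T-V(T_v)$ has vertex-weight $W_{\mathrm{tot}}-s(v)$ and edge-weight $W'_{\mathrm{tot}}-w'(g_v)-t(v)$), one obtains the closed forms
$$W(T,w)=\sum_{v\neq r} s(v)\bigl(W_{\mathrm{tot}}-s(v)\bigr),$$
$$\widehat W_e(T,w')=\sum_{v\neq r} t(v)\bigl(W'_{\mathrm{tot}}-w'(g_v)-t(v)\bigr),$$
$$W_{ve}(T,w,w')=\sum_{v\neq r}\Bigl[\,s(v)\bigl(W'_{\mathrm{tot}}-w'(g_v)-t(v)\bigr)+\bigl(W_{\mathrm{tot}}-s(v)\bigr)\,t(v)\,\Bigr].$$

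With the quantities $s(v)$, $t(v)$, $w'(g_v)$, $W_{\mathrm{tot}}$, $W'_{\mathrm{tot}}$ available from the traversal, each of the three displayed sums is evaluated in $O(n)$ further arithmetic operations, which proves the lemma. I expect the only non-routine step to be the verification of the separation identities in the second paragraph — in particular handling the asymmetry in $W_{ve}$ (a vertex on one side paired with an edge lying wholly on the other side) and the shared-vertex corner cases — while everything else is bookkeeping. As a side remark, the same traversal also yields $W_e(T,w')$ via $W_e=\widehat W_e+\binom{|E(T)|}{2}$ should it be wanted elsewhere, although this is not needed for the statement at hand.
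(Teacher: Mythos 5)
Your proposal is correct: the separation identities for $d_T(x,y)$, $\widehat d_T(e,f)$, and $d_T(x,e)$ in a tree are all valid (including the shared-vertex degenerate cases, where indeed no edge separates the pair), the exchange of summation turns each index into a sum of products of side-weights over the $n-1$ edges $g_v$, and the quantities $s(v)$, $t(v)$, $W_{\mathrm{tot}}$, $W'_{\mathrm{tot}}$ are all available after one postorder traversal, so the total cost is $O(n)=O(m)$. Note, however, that the paper itself does not prove this lemma at all: its ``proof'' is a pointer to Theorem 4.1 of the cited reference \cite{kelenc}, where the linear-time computation of these weighted tree indices is carried out. Your argument is therefore a self-contained replacement for that citation, and it rests on the same standard idea used there (and throughout the cut-method literature): decompose each distance over the separating edges of the tree and accumulate subtree vertex- and edge-weight sums in a single traversal. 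What your write-up buys is independence from the external reference and explicit closed-form edge-contribution formulas for all three indices, including the slightly asymmetric mixed term $W_{ve}$, which you handle correctly by adding both vertex-side/edge-side combinations for each cut edge.
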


\begin{proof}
See the proof of Theorem 4.1 in \cite{kelenc}. \qed
\end{proof}

\noindent
Finally, we are able to proof the main result of this subsection.

\begin{theorem}
Let $G$ be a benzenoid system with the boundary cycle $Z$. Then the edge-Wiener index $W_e(G)$ can be computed in $O(|Z|)$ time.
\end{theorem}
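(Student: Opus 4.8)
The plan is to follow the same three-step pattern as in the proof of the Szeged-index result at the end of Section~\ref{sec:szeged_index}, but with the edge-Wiener ingredients assembled above, and then to convert the auxiliary quantity $\widehat{W}_e(G)$ into $W_e(G)$ by a single arithmetic correction.

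First I would apply Lemma~\ref{T_O(z)} to construct the three vertex-edge weighted quotient trees $(T_1,w_1,w_1')$, $(T_2,w_2,w_2')$, $(T_3,w_3,w_3')$ in $O(|Z|)$ time. It is worth recording explicitly what is implicit here: since these trees are produced by a procedure running in $O(|Z|)$ time, each of them has $O(|Z|)$ vertices and edges (the vertices of $T_i$ arise from the $O(|Z|)$ cut segments of type $i$ together with the contracted thin-edge stars). Consequently, when Lemma~\ref{eW_lin} is applied to each $(T_i,w_i,w_i')$, the bound $O(n)=O(m)$ there becomes $O(|Z|)$, so the nine numbers $\widehat{W}_e(T_i,w_i')$, $W(T_i,w_i)$, $W_{ve}(T_i,w_i,w_i')$, $i\in\{1,2,3\}$, are all obtained in $O(|Z|)$ time. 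Summing them as prescribed by Theorem~\ref{eW_izrek} then yields $\widehat{W}_e(G)$ in $O(|Z|)$ time.

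Second, I would pass from $\widehat{W}_e(G)$ to $W_e(G)$ via the identity $W_e(G)=\widehat{W}_e(G)+\binom{|E(G)|}{2}$ recorded at the beginning of this section. The only thing left to verify is that $|E(G)|$ is itself available within the same budget: since in this section $w_i(C)$ counts the edges of the component $C$ of $G-E_i$ and $w_i'(E)$ counts the edges of $E_i$ joining the two components indexed by $E$, we have $|E(G)|=\sum_{C\in V(T_1)}w_1(C)+\sum_{E\in E(T_1)}w_1'(E)$, a sum over the $O(|Z|)$ elements of one already-constructed tree. Hence $|E(G)|$, and therefore $\binom{|E(G)|}{2}$ and finally $W_e(G)$, are computed in $O(|Z|)$ time as well.

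The routine part is the bookkeeping just described; the one point meriting care is exactly the one hidden in the proof in Section~\ref{sec:szeged_index}, namely that ``linear in the quotient tree'' really means ``linear in $|Z|$'' --- that is, the trees delivered by Lemma~\ref{T_O(z)} have size $O(|Z|)$ no matter how large $|V(G)|$ is. Granting this (which is already built into Lemma~\ref{T_O(z)}), every step above costs $O(|Z|)$ and the theorem follows.
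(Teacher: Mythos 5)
Your proposal is correct and follows essentially the same route as the paper: construct the weighted quotient trees via Lemma~\ref{T_O(z)}, evaluate the weighted Wiener indices via Lemma~\ref{eW_lin}, and combine them via Theorem~\ref{eW_izrek}. Your explicit treatment of the passage from $\widehat{W}_e(G)$ to $W_e(G)$ via $\binom{|E(G)|}{2}$ (with $|E(G)|$ read off from the weights of one quotient tree) is a detail the paper's proof leaves implicit, and it is handled correctly.
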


\begin{proof}
From Lemma \ref{T_O(z)} it follows that the trees $(T_1,w_1,w_1')$, $(T_2, w_2,w_2')$, and $(T_3, w_3, w_3')$ can be obtained in $O(|Z|)$ time. By Lemma \ref{eW_lin}, the Wiener indices of each mentioned tree can be computed in linear time with respect to $|Z|$. Finally, using  Theorem \ref{eW_izrek}, the edge-Wiener index $W_e(G)$ can be computed in $O(|Z|)$ time.
\qed
\end{proof}

\subsection{The edge-Szeged index}

To efficiently calculate the edge-Szeged index of a benzenoid system, we define the \textit{total-Szeged index} of a vertex-edge weighted graph $(G,w,w')$ as
$$Sz_t(G,w,w') = \sum_{e \in E(G)}w'(e)r_1(e|G)r_2(e|G),$$ 
\noindent
where for $i \in \lbrace 1, 2 \rbrace$ we have
$$r_i(e|G) = \sum_{x \in N_i(e|G)} w(x) + \sum_{f \in M_i(e|G)} w'(f).$$

\noindent We can now state the following theorem to express the edge-Szeged index as the sum of total-Szeged indices of weighted quotient trees.

\begin{theorem}
\label{eS_izrek} \cite{tratnik}
If $G$ is a benzenoid system, then
$$ Sz_e(G)= Sz_t(T_1,w_1,w_1') + Sz_t(T_2, w_2,w_2') + Sz_t(T_3, w_3, w_3').$$
\end{theorem}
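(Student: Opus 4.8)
The plan is to run the cut method. Recall that every benzenoid system $G$ is a bipartite partial cube, so the Djokovi\'c--Winkler relation $\Theta$ partitions $E(G)$ into cuts, each of which is an edge cut of $G$ whose removal leaves exactly two connected components. The first (and structurally essential) step is the bookkeeping that links these cuts to the quotient trees: for a fixed direction $i$, the edges of $E_i$ split into the $\Theta$-classes lying in direction $i$, and these $\Theta$-classes are in one-to-one correspondence with the edges of $T_i$. Explicitly, the edge $E = C_1C_2 \in E(T_i)$ corresponds to the set $F_E \subseteq E_i$ of all edges of $G$ joining the component $C_1$ to the component $C_2$ of $G_i$; then $|F_E| = w_i'(E)$, and deleting $E$ from $T_i$ yields two subtrees $T_i^{(1)} \ni C_1$ and $T_i^{(2)} \ni C_2$ such that the two components of $G - F_E$ are exactly the subgraphs $G^{(1)}, G^{(2)}$ of $G$ induced by the unions of the components of $G_i$ indexed by $V(T_i^{(1)})$, respectively $V(T_i^{(2)})$.

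\noindent Next I would prove that, for a fixed $\Theta$-class $F = F_E$ and every edge $e = uv \in F$ with (say) $u \in V(G^{(1)})$, we have $M_1(e|G) = E(G^{(1)})$ and $M_2(e|G) = E(G^{(2)})$. This is where bipartiteness is used: if a vertex $x$ lies on the $u$-side of the cut $F$ then $d_G(x,v) = d_G(x,u)+1$, so any edge $f = xy$ with both ends on the $u$-side has $d_G(f,u) = d_G(f,v) - 1$ and hence belongs to $M_1(e|G)$; symmetrically for the $v$-side; while an edge $f \in F$ is equidistant from $u$ and $v$ (a standard property of $\Theta$-related edges in partial cubes) and so belongs to neither $M_1(e|G)$ nor $M_2(e|G)$. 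In particular $|M_1(e|G)|$ and $|M_2(e|G)|$ depend only on $F$, not on the chosen edge $e \in F$.

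\noindent It remains to express $|E(G^{(j)})|$ through the tree weights defined at the beginning of this section. Every non-$E_i$ edge of $G$ lies inside a single component of $G_i$, so $G^{(1)}$ consists exactly of the edges inside the components $C \in V(T_i^{(1)})$ together with the $E_i$-edges running between two such components; counting these with the weights $w_i(C) = |E(C)|$ and $w_i'$ (the number of edges between the joined components) gives
$$ |E(G^{(1)})| \;=\; \sum_{C \in V(T_i^{(1)})} w_i(C) \;+\; \sum_{E' \in E(T_i^{(1)})} w_i'(E') \;=\; r_1(E|T_i), $$
since $N_1(E|T_i) = V(T_i^{(1)})$ and $M_1(E|T_i) = E(T_i^{(1)})$ in the tree $T_i$; similarly $|E(G^{(2)})| = r_2(E|T_i)$. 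Summing the edge-Szeged contributions cut by cut, and then grouping the cuts by direction, yields
$$ Sz_e(G) \;=\; \sum_{i=1}^{3}\ \sum_{E \in E(T_i)}\ \sum_{e \in F_E} |M_1(e|G)|\,|M_2(e|G)| \;=\; \sum_{i=1}^{3}\ \sum_{E \in E(T_i)} w_i'(E)\, r_1(E|T_i)\, r_2(E|T_i) \;=\; \sum_{i=1}^{3} Sz_t(T_i,w_i,w_i'), $$
which is the assertion.

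\noindent The main obstacle is not the arithmetic but the first paragraph: pinning down (or citing precisely) the bijection between direction-$i$ $\Theta$-classes and edges of $T_i$ with the correct multiplicities, and the fact that the two parts of $G - F_E$ are read off from the two subtrees of $T_i - E$. That is exactly the structural core of the cut method for benzenoid systems and rests on the convexity of $\Theta$-classes in partial cubes rather than on a short direct computation.
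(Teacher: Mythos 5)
The paper gives no proof of this theorem — it is imported verbatim from \cite{tratnik} — so there is no internal argument to compare against; your cut-method derivation is essentially the standard proof from that reference. The argument is correct: for a (simply connected) benzenoid system the edges between two adjacent components of $G_i$ do form a single elementary cut ($\Theta$-class) corresponding bijectively to an edge of $T_i$, the halfspace description $M_1(e|G)=E(G^{(1)})$, $M_2(e|G)=E(G^{(2)})$ with the cut edges equidistant is the standard bipartite/partial-cube fact, and the weight bookkeeping identifying $|E(G^{(j)})|$ with $r_j(E|T_i)$ and summing cut by cut is exactly what yields $Sz_e(G)=\sum_{i=1}^3 Sz_t(T_i,w_i,w_i')$.
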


\noindent For the fast computation of the edge-Szeged index we also need the following lemma.

\begin{lemma}
\label{lema_edge_sz} \cite{tratnik}
Let $(T,w,w')$ be a vertex-edge weighted tree with $n$ vertices and $m$ edges. Then the total-Szeged index $Sz_t(T,w,w')$ can be computed in $O(m) = O(n)$ time.
\end{lemma}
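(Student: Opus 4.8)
The plan is to reduce the whole computation to a single rooted traversal of $T$, exploiting the fact that in a tree every edge induces a clean bipartition of both the vertex set and the edge set. Fix an edge $e=uv\in E(T)$. Removing $e$ splits $T$ into two subtrees $T_u$ and $T_v$ containing $u$ and $v$, respectively. Since $T$ is a tree, every vertex lies strictly closer to exactly one endpoint, so $N_1(e|T)=V(T_u)$ and $N_2(e|T)=V(T_v)$; likewise every edge other than $e$ lies entirely in one subtree and is strictly closer to the corresponding endpoint, so $M_1(e|T)=E(T_u)$ and $M_2(e|T)=E(T_v)$. The edge $e$ itself satisfies $d_T(e,u)=d_T(e,v)=0$ and hence belongs to neither $M_1$ nor $M_2$. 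Writing
$$W_{\mathrm{tot}}=\sum_{x\in V(T)}w(x)+\sum_{f\in E(T)}w'(f),$$
the disjoint decompositions $V(T)=V(T_u)\sqcup V(T_v)$ and $E(T)=E(T_u)\sqcup E(T_v)\sqcup\{e\}$ yield the key complementarity identity $r_1(e|T)+r_2(e|T)=W_{\mathrm{tot}}-w'(e)$. Thus it suffices to compute $r_1(e|T)$ for every edge, after which $r_2(e|T)$ follows by a single subtraction.

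To obtain all the required quantities in linear time, I would root $T$ at an arbitrary vertex and, for each vertex $x$, define the combined subtree weight
$$S(x)=\sum_{y\in V(T_x)}w(y)+\sum_{f\in E(T_x)}w'(f),$$
where $T_x$ is the subtree rooted at $x$. These satisfy the recurrence $S(x)=w(x)+\sum_{c}\bigl(S(c)+w'(xc)\bigr)$, the sum ranging over the children $c$ of $x$, and can therefore be filled in by a single post-order traversal. For a non-root vertex $v$ with parent $u$, cutting $e=uv$ leaves $T_v$ as exactly the rooted subtree at $v$, so one side of the bipartition has total weight $S(v)$ and the other has weight $W_{\mathrm{tot}}-w'(e)-S(v)$. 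Since the product $r_1r_2$ is symmetric in the two sides, the contribution of $e$ to the index is
$$w'(e)\cdot S(v)\cdot\bigl(W_{\mathrm{tot}}-w'(e)-S(v)\bigr),$$
independently of which side is labelled $1$.

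The algorithm then reads off $Sz_t(T,w,w')$ by summing this quantity over the $n-1$ non-root vertices. Rooting the tree (via BFS or DFS), computing $W_{\mathrm{tot}}$, evaluating all $S(x)$ bottom-up, and accumulating the edge contributions are each $O(n)$ operations, so the whole procedure runs in $O(n)=O(m)$ time, as claimed. I do not expect a serious obstacle here; the only point requiring genuine care is the bookkeeping verifying that $M_i(e|T)$ is precisely the edge set of the corresponding subtree and that the central edge $e$ contributes to neither $M_1$ nor $M_2$. This is exactly what makes the complementarity identity valid, and hence what allows the entire index to be expressed through the single family of subtree weights $S(v)$.
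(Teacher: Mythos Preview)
Your argument is correct. The paper itself does not prove this lemma; it is simply quoted from \cite{tratnik}, so there is no in-paper proof to compare against. The approach you describe --- rooting the tree, computing the cumulative subtree weight $S(x)=\sum_{y\in V(T_x)}w(y)+\sum_{f\in E(T_x)}w'(f)$ by a single post-order pass, and then reading off each edge contribution as $w'(e)\,S(v)\bigl(W_{\mathrm{tot}}-w'(e)-S(v)\bigr)$ --- is exactly the standard linear-time reduction one expects the cited reference to use, and all the bookkeeping (in particular that $e$ itself lies in neither $M_1$ nor $M_2$, giving the complementarity $r_1+r_2=W_{\mathrm{tot}}-w'(e)$) is handled correctly.
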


\noindent The main result of this subsection now follows easily.

\begin{theorem}
Let $G$ be a benzenoid system with the boundary cycle $Z$. Then the edge-Szeged index $Sz_e(G)$ can be computed in $O(|Z|)$ time.
\end{theorem}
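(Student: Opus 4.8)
The plan is to follow verbatim the template already established for the Szeged index and the edge-Wiener index, now with the total-Szeged index in place of those indices and with its dedicated lemmas. First I would apply Lemma~\ref{T_O(z)} to produce the three vertex-edge weighted quotient trees $(T_1,w_1,w_1')$, $(T_2,w_2,w_2')$, and $(T_3,w_3,w_3')$; by that lemma this construction costs $O(|Z|)$ time. At this point it is worth recording that each $T_i$ has $O(|Z|)$ vertices and edges: in the construction of Lemma~\ref{tree_O(z)} the vertices of $T_i$ correspond to the cut segments of type $i$ together with the boundary vertices lying on no such cut segment, and Chazelle's algorithm guarantees that the number of these objects is linear in $|Z|$.

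Next I would invoke Lemma~\ref{lema_edge_sz}: since each $T_i$ has $O(|Z|)$ vertices and edges, the total-Szeged index $Sz_t(T_i,w_i,w_i')$ can be computed in $O(|Z|)$ time for every $i \in \{1,2,3\}$. Finally, Theorem~\ref{eS_izrek} gives
$$Sz_e(G) = Sz_t(T_1,w_1,w_1') + Sz_t(T_2,w_2,w_2') + Sz_t(T_3,w_3,w_3'),$$
so summing the three already-computed quantities yields $Sz_e(G)$, and the whole procedure runs in $O(|Z|)$ time.

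I do not expect any genuine obstacle: every nontrivial ingredient --- the sub-linear construction of the weighted quotient trees, the linear-time evaluation of the total-Szeged index on a vertex-edge weighted tree, and the reduction of $Sz_e(G)$ to a sum over the three quotient trees --- has already been supplied. The only point demanding care is the bookkeeping that the trees have size $O(|Z|)$ rather than $O(|V(G)|)$, which is exactly what makes the bound sub-linear; this follows from the polygon-decomposition argument underlying Lemma~\ref{tree_O(z)}. The identical scheme will, incidentally, also dispatch the PI index once the corresponding sum-over-quotient-trees formula and its linear-time tree evaluation are in hand.
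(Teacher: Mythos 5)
Your proposal is correct and follows exactly the same route as the paper: construct the three weighted quotient trees via Lemma~\ref{T_O(z)} in $O(|Z|)$ time, evaluate $Sz_t$ on each in linear time by Lemma~\ref{lema_edge_sz}, and sum via Theorem~\ref{eS_izrek}. Your extra remark that each $T_i$ has $O(|Z|)$ vertices (so the tree computations are indeed $O(|Z|)$ and not merely linear in the tree size) is a worthwhile clarification that the paper leaves implicit.
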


\begin{proof}
By Lemma \ref{T_O(z)}, the trees $(T_1,w_1,w_1')$, $(T_2, w_2,w_2')$, and $(T_3, w_3, w_3')$ can be obtained in $O(|Z|)$ time. From Lemma \ref{lema_edge_sz} it follows that the total-Szeged index of any mentioned tree can be computed in linear time with respect to $|Z|$. Finally, using  Theorem \ref{eS_izrek}, the edge-Szeged index $Sz_e(G)$ can be computed in $O(|Z|)$ time.
\qed
\end{proof}

\subsection{The PI index}

For the PI index we first define the edge-PI index and the vertex-PI index of an edge-weighted graph $(G,w')$ and vertex-edge weighted graph $(G,w,w')$, respectively, in the following way

\begin{eqnarray*}
PI_e(G,w') & = & \sum_{e \in E(G)} w'(e)\big(m_1(e|G) + m_2(e|G)\big),\\
PI_v(G,w,w') & = & \sum_{e \in E(G)} w'(e)\big(n_1(e|G) + n_2(e|G)\big),
\end{eqnarray*}

\noindent where for $i \in \lbrace 1, 2 \rbrace$ we have

\begin{eqnarray*}
n_i(e|G) & = & \sum_{x \in N_i(e|G)} w(x), \\
m_i(e|G) & = & \sum_{f \in M_i(e|G)} w'(f).
\end{eqnarray*}

\noindent The following theorem is crucial for the main result.

\begin{theorem}
\label{pi_sum} \cite{tratnik}
If $G$ is a benzenoid system, then
$$PI_e(G)= \sum_{i=1}^{3}\big( PI_e(T_i,w_i') + PI_v(T_i,w_i,w_i') \big).$$
\end{theorem}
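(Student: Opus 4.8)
The plan is to prove the identity by the \emph{cut method}, splitting the sum that defines $PI_e(G)$ over the elementary cuts of $G$ and matching each cut's contribution to the corresponding edge of a quotient tree. First I would recall the structural facts that make this work: a benzenoid system is a partial cube, so the Djokovi\'c--Winkler relation $\Theta$ partitions $E(G)$ into elementary cuts, and these cuts are grouped according to the three edge directions. For a fixed $i$, the cuts consisting of edges of $E_i$ are in bijection with the edges of $T_i$: a cut $F$ joining the path components $C_1,C_2$ of $G_i$ is exactly the edge $E=C_1C_2$ of $T_i$, and by the definition of the weights preceding Lemma~\ref{T_O(z)} we have $w_i'(E)=|F|$, while $w_i(C)$ counts the edges of $G$ lying inside the path component $C$.

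Next I would analyze a single cut. Fix $i$, a cut $F$ of direction $i$ with corresponding tree edge $E=C_1C_2$, and any edge $e=uv\in F$ with $u\in C_1$, $v\in C_2$. Removing $F$ splits $G$ into two connected parts $W_u\ni u$ and $W_v\ni v$. Using the partial-cube distance structure (for $a\in W_u$ one has $d_G(a,v)=d_G(a,u)+1$), I would show that every edge $f=ab$ with both endpoints in $W_u$ satisfies $d_G(f,u)<d_G(f,v)$, hence $f\in M_1(e|G)$; symmetrically, both endpoints in $W_v$ gives $f\in M_2(e|G)$; and every edge of $F$ itself is equidistant from $u$ and $v$, so it lies in neither set. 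Consequently $M_1(e|G)\cup M_2(e|G)=E(G)\setminus F$, and, crucially, the sets $M_1(e|G),M_2(e|G)$ depend only on $F$ and not on the particular choice of $e\in F$.

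The heart of the argument is to rewrite these two quantities in tree terms. Each edge of $G$ counted in $|M_1(e|G)|$ is either an edge of some other cut $F'\neq F$ lying on the $W_u$-side, or an edge interior to a path component lying on the $W_u$-side; the former are accounted for by the edges of the subtree of $T_i-E$ containing $C_1$ (their $w_i'$-weights summing to $m_1(E|T_i)$), and the latter by the vertices of that subtree (their $w_i$-weights summing to $n_1(E|T_i)$). This yields $|M_1(e|G)|=m_1(E|T_i)+n_1(E|T_i)$ and, symmetrically, $|M_2(e|G)|=m_2(E|T_i)+n_2(E|T_i)$. Finally I would group the defining sum of $PI_e(G)$ first over the $|F|=w_i'(E)$ edges in each cut, then over the cuts of each direction, and then over $i$, obtaining
$$PI_e(G)=\sum_{i=1}^{3}\sum_{E\in E(T_i)}w_i'(E)\Big(\big(m_1(E|T_i)+m_2(E|T_i)\big)+\big(n_1(E|T_i)+n_2(E|T_i)\big)\Big),$$
which is exactly $\sum_{i=1}^{3}\big(PI_e(T_i,w_i')+PI_v(T_i,w_i,w_i')\big)$, as claimed.

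The step I expect to be the main obstacle is the faithful correspondence used in the third paragraph: that the two connected sides of $G$ obtained by deleting a cut $F$ match, with the correct weights, the two subtrees obtained by deleting the corresponding edge $E$ from $T_i$. Justifying this rigorously requires the benzenoid-specific facts that $T_i$ is a tree and that its vertices and edges partition $E(G)$ into \emph{path-interior} edges (carried by $w_i$) and \emph{cut} edges (carried by $w_i'$), together with the convexity of elementary cuts in a partial cube, which guarantees that each path component and each other cut lies wholly on one side of $F$. Everything else is bookkeeping on trees, where the absence of equidistant vertices and edges makes the sets $N_j(E|T_i)$ and $M_j(E|T_i)$ coincide with the vertices and edges of the two subtrees of $T_i-E$.
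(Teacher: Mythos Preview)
The paper does not supply its own proof of this theorem; it is quoted verbatim from \cite{tratnik} and used as a black box. Your proposal is correct and is exactly the cut-method argument carried out in the cited reference: partition $E(G)$ into $\Theta$-classes, identify each class of direction $i$ with an edge $E$ of $T_i$, use the partial-cube structure to show that for any $e$ in the cut $F$ the sets $M_1(e|G),M_2(e|G)$ are the edges on the two sides of $G\setminus F$, and then split each side-count as (edges inside path components) $+$ (edges in other direction-$i$ cuts) $= n_j(E|T_i)+m_j(E|T_i)$ via the Section~4 weights $w_i,w_i'$. Your identification of the delicate step---that the two sides of $G\setminus F$ correspond, with the right weights, to the two subtrees of $T_i-E$---is accurate, and the justification you sketch (convexity of halfspaces in a partial cube, so every path component and every other cut lies entirely on one side) is the right one.
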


\noindent For the fast computation of the PI index we also need the following two lemmas.

\begin{lemma}\cite{tratnik}
\label{pi_lema1} 
Let $(T,w')$ be an edge-weighted tree with $n$ vertices and $m$ edges. Then the PI index $PI_e(T,w')$ can be computed in $O(m) = O(n)$ time.
\end{lemma}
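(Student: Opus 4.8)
The plan is to exploit the tree structure of $T$ to reduce $PI_e(T,w')$ to a simple closed form that can be evaluated with a constant number of passes over the edge set. First I would recall that for any edge $e=uv$ of a tree $T$, the graph $T-e$ has exactly two connected components; call them $T_u$ (the one containing $u$) and $T_v$ (the one containing $v$). For an edge $f\in E(T)$ with $f\neq e$, the unique $u$--$v$ path in $T$ passes through $e$, so the distance from $f$ to $u$ is strictly smaller than the distance from $f$ to $v$ precisely when $f$ lies in $T_u$, and symmetrically for $T_v$. For $f=e$ itself we have $d_T(e,u)=d_T(e,v)=0$, so $e$ belongs to neither $M_1(e|T)$ nor $M_2(e|T)$. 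Hence $M_1(e|T)=E(T_u)$ and $M_2(e|T)=E(T_v)$, and the three sets $E(T_u)$, $E(T_v)$, $\{e\}$ partition $E(T)$.

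Next I would set $W=\sum_{f\in E(T)}w'(f)$ for the total edge weight. By the previous paragraph, for every edge $e\in E(T)$ we get $m_1(e|T)+m_2(e|T)=\sum_{f\in E(T_u)}w'(f)+\sum_{f\in E(T_v)}w'(f)=W-w'(e)$. Substituting into the definition of the edge-PI index of an edge-weighted graph yields
\[
PI_e(T,w')=\sum_{e\in E(T)}w'(e)\bigl(m_1(e|T)+m_2(e|T)\bigr)=\sum_{e\in E(T)}w'(e)\bigl(W-w'(e)\bigr)=W^2-\sum_{e\in E(T)}w'(e)^2 .
\]
(As a sanity check, taking $w'\equiv 1$ recovers the known value $m(m-1)$ for an unweighted tree.)

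Finally, both $W$ and $\sum_{e\in E(T)}w'(e)^2$ are computed by a single traversal of the $m=n-1$ edges of $T$, after which $PI_e(T,w')=W^2-\sum_{e\in E(T)}w'(e)^2$ is returned in constant time; this gives the claimed $O(m)=O(n)$ running time. The only mildly delicate point is the bookkeeping for the edge $e$ itself: it is equidistant from its own endpoints and therefore contributes to neither $M_1(e|T)$ nor $M_2(e|T)$, which is exactly why the identity reads $W-w'(e)$ rather than $W$. Beyond that the argument is entirely routine, so I do not anticipate any real obstacle.
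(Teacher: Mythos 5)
Your proposal is correct: in a tree every edge $f\neq e$ lies in exactly one component of $T-e$ and is therefore strictly closer to the corresponding endpoint, so $m_1(e|T)+m_2(e|T)=W-w'(e)$, giving the closed form $PI_e(T,w')=W^2-\sum_{e\in E(T)}w'(e)^2$ computable in one pass over the edges. This is essentially the same argument as in the cited source (the paper itself only quotes the lemma from \cite{tratnik}), so nothing further is needed.
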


\begin{lemma}\cite{tratnik}
Let $(T,w,w')$ be a vertex-edge weighted tree with $n$ vertices and $m$ edges. Then the vertex-PI index $PI_v(T,w,w')$ can be computed in $O(m) = O(n)$ time.
\label{pi_lema2}
\end{lemma}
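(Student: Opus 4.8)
The plan is to reduce $PI_v(T,w,w')$ to a single product of two weight sums, after which the linear-time bound is immediate. Everything hinges on the tree structure: for any edge $e=uv\in E(T)$, deleting $e$ splits $T$ into exactly two components, the one containing $u$ and the one containing $v$, which I denote $T_u$ and $T_v$.

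First I would verify that $N_1(e|T)$ and $N_2(e|T)$ partition $V(T)$. For any $x\in V(T_u)$ the unique $x$--$v$ path runs through $u$ and then across $e$, so $d_T(x,v)=d_T(x,u)+1$ and hence $x\in N_1(e|T)$; symmetrically every $x\in V(T_v)$ lies in $N_2(e|T)$. Thus no vertex is equidistant from $u$ and $v$, and $V(T)=N_1(e|T)\sqcup N_2(e|T)$.

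Writing $W:=\sum_{x\in V(T)}w(x)$ for the total vertex weight, this partition gives $n_1(e|T)+n_2(e|T)=W$ for \emph{every} edge $e$, independently of the choice of $e$. Substituting into the definition of $PI_v$ then collapses the sum:
$$PI_v(T,w,w')=\sum_{e\in E(T)}w'(e)\big(n_1(e|T)+n_2(e|T)\big)=W\sum_{e\in E(T)}w'(e).$$
This is the vertex-edge-weighted refinement of the identity $PI_v(G)=|V(G)|\cdot|E(G)|$ for bipartite graphs recalled in the introduction, specialised to the (bipartite) tree $T$.

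Finally, the two factors $\sum_{x\in V(T)}w(x)$ and $\sum_{e\in E(T)}w'(e)$ are each computed by a single scan of the vertex set and of the edge set, in $O(n)$ and $O(m)$ time respectively, and their product costs $O(1)$. Since $m=n-1$ for a tree, the total running time is $O(m)=O(n)$, as claimed. I expect no real obstacle here: the only step needing care is the partition argument, and once $n_1(e|T)+n_2(e|T)$ is recognised as the constant $W$, the per-edge distance computations that make the general-graph case costly vanish entirely.
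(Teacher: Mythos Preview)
Your proof is correct. The paper does not actually supply a proof of this lemma; it merely cites the result from \cite{tratnik}, so there is no argument in the paper to compare against. Your reduction to the closed form $PI_v(T,w,w')=\Big(\sum_{x\in V(T)}w(x)\Big)\Big(\sum_{e\in E(T)}w'(e)\Big)$ is clean, and in fact sharper than a generic linear-time tree traversal: it shows that no per-edge work beyond summing the weights is needed at all, because $n_1(e|T)+n_2(e|T)$ is the constant $W$ on a tree.
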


\noindent We are ready to show the final result of this subsection.

\begin{theorem}
Let $G$ be a benzenoid system with the boundary cycle $Z$. Then the PI index $PI(G)$ can be computed in $O(|Z|)$ time.
\end{theorem}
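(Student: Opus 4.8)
The plan is to mirror the three preceding theorems (for the Szeged, edge-Wiener, and edge-Szeged indices): reduce the computation on $G$ to computations on the three weighted quotient trees, obtain those trees cheaply from the boundary cycle, run the known linear-time tree routines, and add up.

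First I would invoke Lemma~\ref{T_O(z)} to construct the vertex-edge weighted trees $(T_1,w_1,w_1')$, $(T_2,w_2,w_2')$, $(T_3,w_3,w_3')$ in $O(|Z|)$ time. Since each $T_i$ is obtained from $\Gamma_i$ by contracting its thin edges, and the vertices of $\Gamma_i$ are the cut segments of type $i$ together with the boundary vertices of $Z$ not lying on any such segment, each $T_i$ has $O(|Z|)$ vertices and $O(|Z|)$ edges. In particular the edge-weighted trees $(T_i,w_i')$ are available within the same time bound by simply forgetting the vertex weights.

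Next, for each $i\in\{1,2,3\}$ I would apply Lemma~\ref{pi_lema1} to compute $PI_e(T_i,w_i')$ and Lemma~\ref{pi_lema2} to compute $PI_v(T_i,w_i,w_i')$; by those lemmas each of these six numbers is produced in time linear in $|V(T_i)| = O(|Z|)$. Finally, Theorem~\ref{pi_sum} gives
$$PI(G) = PI_e(G) = \sum_{i=1}^{3}\big(PI_e(T_i,w_i') + PI_v(T_i,w_i,w_i')\big),$$
so a single pass of additions over the six computed values yields $PI(G)$, and the total running time is $O(|Z|)$.

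I do not anticipate a genuine obstacle: every ingredient — the decomposition formula (Theorem~\ref{pi_sum}), the linear-time tree algorithms (Lemmas~\ref{pi_lema1} and~\ref{pi_lema2}), and the sub-linear construction of the weighted trees (Lemma~\ref{T_O(z)}) — is already in hand, and the argument is a routine assembly identical in structure to the three analogous theorems above. The only point deserving a sentence of care is the bound $|V(T_i)| = O(|Z|)$, which is exactly what turns the linear-time tree routines into a sub-linear algorithm with respect to $|V(G)|$; this is immediate from the construction in Lemma~\ref{T_O(z)}.
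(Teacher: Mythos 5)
Your proposal is correct and follows exactly the paper's own argument: construct the weighted quotient trees via Lemma~\ref{T_O(z)} in $O(|Z|)$ time, apply Lemmas~\ref{pi_lema1} and~\ref{pi_lema2} to each tree, and combine via the decomposition of Theorem~\ref{pi_sum}. The added remark that $|V(T_i)| = O(|Z|)$ is a sensible (if implicit in the paper) point that makes the time bound explicit.
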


\begin{proof}
By Lemma \ref{T_O(z)}, the trees $(T_1,w_1,w_1')$, $(T_2, w_2,w_2')$, and $(T_3, w_3, w_3')$ can be obtained in $O(|Z|)$ time. From Lemma \ref{pi_lema1} and Lemma \ref{pi_lema2} it follows that the edge-PI index and the vertex-PI index of any mentioned tree can be computed in linear time with respect to $|Z|$. Finally, using  Theorem \ref{pi_sum}, the PI index $PI(G)$ can be computed in $O(|Z|)$ time.
\qed
\end{proof}

\section*{Acknowledgement}

The author Matev\v z \v Crepnjak acknowledge the financial support from the Slovenian Research Agency (research core funding No. P1-0285). 

\noindent The author Niko Tratnik was finacially supported by the Slovenian Research Agency.
%%%%%%%%%%%%%%%%%%%%%%%%%%%%%%%%%%%%%%%%%%%%%%%%%%%%%%%%%%%%%%%%%%%%%
%%%%%%%%%%%%%%%%%%%%%%%%%%%%%%%%%%%%%%%%%%%%%%%%%%%%%%%%%%%%%%%%%%%%%

\baselineskip=16pt

\end{document}